\documentclass{article}
\usepackage{amsmath,url,amsthm,afterpage}
\RequirePackage[colorlinks,citecolor=blue,urlcolor=blue]{hyperref}
\theoremstyle{plain}
\newtheorem{theorem}{Theorem}
\DeclareMathOperator{\lcm}{lcm}
\newcommand{\seqnum}[1]{\href{http://oeis.org/#1}{#1}}
\setlength{\arraycolsep}{0.5mm} 
\allowdisplaybreaks

\title{Counting toroidal binary arrays, II}
\author{S. N. Ethier\thanks{Department of Mathematics, University of Utah, 155 South 1400 East, Salt Lake City, UT 84112 USA. \href{mailto:ethier@math.utah.edu}{ethier@math.utah.edu}.  Partially supported by a grant from the Simons Foundation (209632).} \ and Jiyeon Lee\thanks{Department of Statistics, Yeungnam University, 214-1 Daedong, Kyeongsan, Kyeongbuk 712-749, South Korea. \href{mailto:leejy@yu.ac.kr}{leejy@yu.ac.kr}.  Supported by the Basic Science Research Program through the National Research Foundation of Korea (NRF) funded by the Ministry of Science, ICT \& Future Planning (No.~2013R1A1A3A04007670).}}
\date{}
\begin{document}
\maketitle

\begin{abstract}
We derive formulas for $(i)$ the number of toroidal $n\times n$ binary arrays,
allowing rotation of rows and/or columns as well as matrix transposition, and $(ii)$ the number of toroidal $n\times n$ binary arrays, allowing rotation and/or reflection of rows and/or columns as well as matrix transposition.\medskip

\noindent 2010 \emph{Mathematics Subject Classification}: Primary 05A05.\medskip

\noindent \emph{Keywords}:
toroidal array, Euler's phi function, group action, orbit, P\'olya's enumeration theorem.
\end{abstract}

\vglue0.5in

\section{Introduction}

A previous paper \cite{E} found the number of (distinct) toroidal $m\times n$ binary arrays, allowing rotation of rows and/or columns, to be
\begin{equation}\label{a(m,n)}
a(m,n):=\frac{1}{mn}\sum_{c\,|\,m}\;\sum_{d\,|\,n}\varphi(c)\varphi(d)\,2^{mn/\lcm(c,d)},
\end{equation}
where $\varphi$ is Euler's phi function and lcm stands for least common multiple.  This is \seqnum{A184271} in the \textit{On-Line Encyclopedia of Integer Sequences} \cite{S}.  The main diagonal is \seqnum{A179043}.  It was also shown that, allowing rotation and/or  reflection of rows and/or columns, the number becomes
\begin{equation}\label{b(m,n)}
b(m,n):=b_1(m,n)+b_2(m,n)+b_3(m,n)+b_4(m,n),
\end{equation}
where
$$
b_1(m,n):=\frac{1}{4mn}\sum_{c\,|\,m}\;\sum_{d\,|\,n}\varphi(c)\varphi(d)\,2^{mn/\lcm(c,d)},
$$ 
\begin{eqnarray*}
&&\!\!\!\!\!\!\!\!\!\!\!\!b_2(m,n)\\
&:=&\frac{1}{4n}\sum_{d\,|\,n}\varphi(d)\,2^{mn/d}\\
&&\!\!\!{}+\begin{cases}(4n)^{-1}\sum'\varphi(d)(2^{(m + 1)n/(2d)} - 2^{m n/d}),&\text{if $m$ is odd;}\\
(8n)^{-1}\sum'\varphi(d)(2^{m n/(2d)}+2^{(m+2)n/(2d)}- 2\cdot 2^{m n/d}),&\text{if $m$ is even,}\end{cases}
\end{eqnarray*}
with $\sum':=\sum_{d\,|\,n:\; d\text{ is odd}}$,
$$
b_3(m,n):=b_2(n,m),
$$ 
and
$$
b_4(m,n):=\begin{cases}2^{(mn - 3)/2},&\text{if $m$ and $n$ are odd;}\\
3\cdot2^{mn/2 - 3},&\text{if $m$ and $n$ have opposite parity;}\\
7\cdot2^{m n/2 - 4},&\text{if $m$ and $n$ are even.}\end{cases}
$$
(The formula for $b_2(m,n)$ given in \cite{E} is simplified here.)
This is \seqnum{A222188} in the \textit{OEIS} \cite{S}.  The main diagonal is \seqnum{A209251}. 

Our aim here is to derive the corresponding formulas when $m=n$ and we allow matrix transposition as well.  More precisely, we show that the number of (distinct) toroidal $n\times n$ binary arrays, allowing rotation of rows and/or columns as well as matrix transposition, is
\begin{equation}\label{alpha(n)}
\alpha(n)=\frac{1}{2}\,a(n,n)+\frac{1}{2n}\,\sum_{d\,|\,n}\varphi(d)\,2^{n(n+d-2\lfloor d/2\rfloor)/(2d)},
\end{equation}
where $a(n,n)$ is from \eqref{a(m,n)}.  When we allow rotation and/or reflection of rows and/or columns as well as matrix transposition, the number becomes
\begin{eqnarray}\label{beta(n)}
\beta(n)&=&\frac{1}{2}\,b(n,n)+\frac{1}{4n}\,\sum_{d\,|\,n}\varphi(d)\,2^{n(n+d-2\lfloor d/2\rfloor)/(2d)}\nonumber\\
&&\qquad{}+\begin{cases}2^{(n^2-5)/4},&\text{if $n$ is odd;}\\5\cdot 2^{n^2/4-3},&\text{if $n$ is even,}\end{cases}
\end{eqnarray}
where $b(n,n)$ is from \eqref{b(m,n)}.  At the time of writing, sequences \eqref{alpha(n)} and \eqref{beta(n)} were not in the \textit{OEIS}.

For an alternative description, we could define a group action on the set of $n\times n$ binary arrays, which has $2^{n^2}$ elements.  If the group is generated by $\sigma$ (row rotation) and $\tau$ (column rotation), then the number of orbits is given by $a(n,n)$; see \cite{E}.  If the group is generated by $\sigma$, $\tau$, and $\zeta$ (matrix transposition), then the number of orbits is given by $\alpha(n)$;  see Theorem~\ref{alpha-thm} below.  If the group is generated by $\sigma$, $\tau$, $\rho$ (row reflection), and $\theta$ (column reflection), then the number of orbits is given by $b(n,n)$; see \cite{E}.  If the group is generated by $\sigma$, $\tau$, $\rho$, $\theta$, and $\zeta$, then the number of orbits is given by $\beta(n)$; see Theorem~\ref{beta-thm} below.

Both theorems are proved using P\'olya's enumeration theorem (actually, the simplified unweighted version;  see, e.g., van Lint and Wilson \cite[Theorem~37.1, p.~524]{vW}).  

To help clarify the distinction between the various group actions, we consider the case of $3\times3$ binary arrays as in \cite{E}.  When the group is generated by $\sigma$ and $\tau$ (allowing rotation of rows and/or columns), there are 64 orbits, which were listed in \cite{E}.  When the group is generated by $\sigma$, $\tau$, and $\zeta$ (allowing rotation of rows and/or columns as well as matrix transposition), there are 44 orbits, which are listed in Table~\ref{3x3-alpha} below.  When the group is generated by $\sigma$, $\tau$, $\rho$, and $\theta$ (allowing rotation and/or reflection of rows and/or columns), there are 36 orbits, which were listed in \cite{E}.  When the group is generated by $\sigma$, $\tau$, $\rho$, $\theta$, and $\zeta$ (allowing rotation and/or reflection of rows and/or columns as well as matrix transposition), there are 26 orbits, which are listed in Table~\ref{3x3-beta} below.

\begin{table}[htb]
\caption{\label{3x3-alpha}A list of the 44 orbits of the group action in which the group generated by $\sigma$, $\tau$, and $\zeta$ acts on the set of $3\times 3$ binary arrays.  (Rows and/or columns can be rotated and matrices can be transposed.)  Each orbit is represented by its minimal element in 9-bit binary form.  Subscripts indicate orbit size.  Bars separate different numbers of 1s.}
\begin{gather*}
\begin{pmatrix}0&0&0\\0&0&0\\0&0&0\end{pmatrix}_{\!\!\!1}\bigg|
\begin{pmatrix}0&0&0\\0&0&0\\0&0&1\end{pmatrix}_{\!\!\!9}\bigg|
\begin{pmatrix}0&0&0\\0&0&0\\0&1&1\end{pmatrix}_{\!\!\!18}\begin{pmatrix}0&0&0\\0&0&1\\0&1&0\end{pmatrix}_{\!\!\!9}\begin{pmatrix}0&0&0\\0&0&1\\1&0&0\end{pmatrix}_{\!\!\!9}\bigg|\\
\begin{pmatrix}0&0&0\\0&0&0\\1&1&1\end{pmatrix}_{\!\!\!6}\begin{pmatrix}0&0&0\\0&0&1\\0&1&1\end{pmatrix}_{\!\!\!9}\begin{pmatrix}0&0&0\\0&0&1\\1&0&1\end{pmatrix}_{\!\!\!18}\begin{pmatrix}0&0&0\\0&0&1\\1&1&0\end{pmatrix}_{\!\!\!18}\begin{pmatrix}0&0&0\\0&1&1\\0&1&0\end{pmatrix}_{\!\!\!9}\begin{pmatrix}0&0&0\\0&1&1\\1&0&0\end{pmatrix}_{\!\!\!18}\begin{pmatrix}0&0&1\\0&1&0\\1&0&0\end{pmatrix}_{\!\!\!3}\begin{pmatrix}0&0&1\\1&0&0\\0&1&0\end{pmatrix}_{\!\!\!3}\bigg|\\
\begin{pmatrix}0&0&0\\0&0&1\\1&1&1\end{pmatrix}_{\!\!\!18}\begin{pmatrix}0&0&0\\0&1&1\\0&1&1\end{pmatrix}_{\!\!\!9}\begin{pmatrix}0&0&0\\0&1&1\\1&0&1\end{pmatrix}_{\!\!\!18}\begin{pmatrix}0&0&0\\0&1&1\\1&1&0\end{pmatrix}_{\!\!\!18}\begin{pmatrix}0&0&0\\1&1&1\\0&0&1\end{pmatrix}_{\!\!\!18}\begin{pmatrix}0&0&1\\0&0&1\\1&1&0\end{pmatrix}_{\!\!\!9}\\
\begin{pmatrix}0&0&1\\0&1&0\\1&0&1\end{pmatrix}_{\!\!\!9}\begin{pmatrix}0&0&1\\0&1&0\\1&1&0\end{pmatrix}_{\!\!\!9}\begin{pmatrix}0&0&1\\1&0&0\\0&1&1\end{pmatrix}_{\!\!\!18}\bigg|\begin{pmatrix}0&0&0\\0&1&1\\1&1&1\end{pmatrix}_{\!\!\!18}\begin{pmatrix}0&0&0\\1&1&1\\0&1&1\end{pmatrix}_{\!\!\!18}\begin{pmatrix}0&0&1\\0&0&1\\1&1&1\end{pmatrix}_{\!\!\!9}\\
\begin{pmatrix}0&0&1\\0&1&0\\1&1&1\end{pmatrix}_{\!\!\!18}\begin{pmatrix}0&0&1\\0&1&1\\1&1&0\end{pmatrix}_{\!\!\!9}\begin{pmatrix}0&0&1\\1&0&0\\1&1&1\end{pmatrix}_{\!\!\!18}\begin{pmatrix}0&0&1\\1&0&1\\1&1&0\end{pmatrix}_{\!\!\!18}\begin{pmatrix}0&0&1\\1&1&0\\1&0&1\end{pmatrix}_{\!\!\!9}\begin{pmatrix}0&0&1\\1&1&0\\1&1&0\end{pmatrix}_{\!\!\!9}\bigg|\\
\begin{pmatrix}0&0&0\\1&1&1\\1&1&1\end{pmatrix}_{\!\!\!6}\begin{pmatrix}0&0&1\\0&1&1\\1&1&1\end{pmatrix}_{\!\!\!9}\begin{pmatrix}0&0&1\\1&0&1\\1&1&1\end{pmatrix}_{\!\!\!18}\begin{pmatrix}0&0&1\\1&1&0\\1&1&1\end{pmatrix}_{\!\!\!18}\begin{pmatrix}0&0&1\\1&1&1\\1&0&1\end{pmatrix}_{\!\!\!9}\begin{pmatrix}0&0&1\\1&1&1\\1&1&0\end{pmatrix}_{\!\!\!18}\begin{pmatrix}0&1&1\\1&0&1\\1&1&0\end{pmatrix}_{\!\!\!3}\begin{pmatrix}0&1&1\\1&1&0\\1&0&1\end{pmatrix}_{\!\!\!3}\bigg|\\
\begin{pmatrix}0&0&1\\1&1&1\\1&1&1\end{pmatrix}_{\!\!\!18}\begin{pmatrix}0&1&1\\1&0&1\\1&1&1\end{pmatrix}_{\!\!\!9}\begin{pmatrix}0&1&1\\1&1&0\\1&1&1\end{pmatrix}_{\!\!\!9}\bigg|
\begin{pmatrix}0&1&1\\1&1&1\\1&1&1\end{pmatrix}_{\!\!\!9}\bigg|
\begin{pmatrix}1&1&1\\1&1&1\\1&1&1\end{pmatrix}_{\!\!\!1}
\end{gather*}
\end{table}
  
\begin{table}[htb]
\caption{\label{3x3-beta}A list of the 26 orbits of the group action in which the group generated by $\sigma$, $\tau$, $\rho$, $\theta$, and $\zeta$ acts on the set of $3\times 3$ binary arrays.  (Rows and/or columns can be rotated and/or reflected and matrices can be transposed.)  Each orbit is represented by its minimal element in 9-bit binary form.  Subscripts indicate orbit size.  Bars separate different numbers of 1s.}
\begin{gather*}
\begin{pmatrix}0&0&0\\0&0&0\\0&0&0\end{pmatrix}_{\!\!\!1}\bigg|
\begin{pmatrix}0&0&0\\0&0&0\\0&0&1\end{pmatrix}_{\!\!\!9}\bigg|
\begin{pmatrix}0&0&0\\0&0&0\\0&1&1\end{pmatrix}_{\!\!\!18}\begin{pmatrix}0&0&0\\0&0&1\\0&1&0\end{pmatrix}_{\!\!\!18}\bigg|\begin{pmatrix}0&0&0\\0&0&0\\1&1&1\end{pmatrix}_{\!\!\!6}\begin{pmatrix}0&0&0\\0&0&1\\0&1&1\end{pmatrix}_{\!\!\!36}\\
\begin{pmatrix}0&0&0\\0&0&1\\1&1&0\end{pmatrix}_{\!\!\!36}\begin{pmatrix}0&0&1\\0&1&0\\1&0&0\end{pmatrix}_{\!\!\!6}\bigg|
\begin{pmatrix}0&0&0\\0&0&1\\1&1&1\end{pmatrix}_{\!\!\!36}\begin{pmatrix}0&0&0\\0&1&1\\0&1&1\end{pmatrix}_{\!\!\!9}\begin{pmatrix}0&0&0\\0&1&1\\1&0&1\end{pmatrix}_{\!\!\!36}\begin{pmatrix}0&0&1\\0&0&1\\1&1&0\end{pmatrix}_{\!\!\!9}\begin{pmatrix}0&0&1\\0&1&0\\1&0&1\end{pmatrix}_{\!\!\!36}\bigg|\\
\begin{pmatrix}0&0&0\\0&1&1\\1&1&1\end{pmatrix}_{\!\!\!36}\begin{pmatrix}0&0&1\\0&0&1\\1&1&1\end{pmatrix}_{\!\!\!9}\begin{pmatrix}0&0&1\\0&1&0\\1&1&1\end{pmatrix}_{\!\!\!36}\begin{pmatrix}0&0&1\\0&1&1\\1&1&0\end{pmatrix}_{\!\!\!36}\begin{pmatrix}0&0&1\\1&1&0\\1&1&0\end{pmatrix}_{\!\!\!9}\bigg|\begin{pmatrix}0&0&0\\1&1&1\\1&1&1\end{pmatrix}_{\!\!\!6}\begin{pmatrix}0&0&1\\0&1&1\\1&1&1\end{pmatrix}_{\!\!\!36}\\
\begin{pmatrix}0&0&1\\1&1&0\\1&1&1\end{pmatrix}_{\!\!\!36}\begin{pmatrix}0&1&1\\1&0&1\\1&1&0\end{pmatrix}_{\!\!\!6}\bigg|
\begin{pmatrix}0&0&1\\1&1&1\\1&1&1\end{pmatrix}_{\!\!\!18}\begin{pmatrix}0&1&1\\1&0&1\\1&1&1\end{pmatrix}_{\!\!\!18}\bigg|
\begin{pmatrix}0&1&1\\1&1&1\\1&1&1\end{pmatrix}_{\!\!\!9}\bigg|
\begin{pmatrix}1&1&1\\1&1&1\\1&1&1\end{pmatrix}_{\!\!\!1}
\end{gather*}
\end{table}


Table \ref{values} provides numerical values for $\alpha(n)$ and $\beta(n)$ for small $n$.

\begin{table}[htb]
\caption{\label{values}The values of $\alpha(n)$ and $\beta(n)$ for $n=1,2,\ldots,12$.\medskip}
\begin{center}
\begin{tiny}
\begin{tabular}{rrr}\hline\noalign{\smallskip}
\multicolumn{1}{c}{$n$} & \multicolumn{1}{c}{$\alpha(n)$} & \multicolumn{1}{c}{$\beta(n)$} \\
\noalign{\smallskip}\hline\noalign{\smallskip}
1 & 2 & 2 \\
2 & 6 & 6 \\
3 & 44 & 26 \\
4 & 2209 & 805 \\
5 & 674384 & 172112 \\
6 & 954623404 & 239123150 \\
7 & 5744406453840 & 1436120190288 \\
8 & 144115192471496836 & 36028817512382026 \\
9 & 14925010120653819583840 & 3731252531904348833632 \\
10 & 6338253001142965335834871200 & 1584563250300891724601560272 \\
11 & 10985355337065423791175013899922368 & 2746338834266358751489231123956672 \\
12 & 77433143050453552587418968170813573149024 & 19358285762613388352671214587818634041520 \\
\noalign{\smallskip}\hline
\end{tabular}
\end{tiny}
\end{center}
\end{table}

We take this opportunity to correct a small gap in the proof of Theorem~2 in \cite{E}.  The proof assumed implicitly that $m,n\ge3$.  The theorem is correct as stated for $m,n\ge1$, so the proof is incomplete if $m$ or $n$ is 1 or 2.  Following the proof of Theorem 2 below, we supply the missing steps.

\section{Rotation of rows and columns, and matrix transposition}

Let $X_n:=\{0,1\}^{\{0,1,\ldots,n-1\}^2}$ be the set of $n\times n$ matrices of 0s and 1s, which has $2^{n^2}$
elements.  Let $\alpha(n)$ denote the number of orbits of the group action on $X_n$ by the group of order $2n^2$ generated by $\sigma$ (row rotation), $\tau$ (column rotation), and $\zeta$ (matrix transposition).  (Exception: If $n=1$, the group is of order 1.)

Informally, $\alpha(n)$ is the number of (distinct) toroidal $n\times n$ binary arrays, allowing rotation of rows and/or columns as well as matrix transposition.

\begin{theorem}\label{alpha-thm}
With $a(n,n)$ defined using \eqref{a(m,n)}, $\alpha(n)$ is given by \eqref{alpha(n)}.
\end{theorem}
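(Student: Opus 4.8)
The plan is to apply the unweighted form of Pólya's enumeration theorem (Burnside's lemma) to the group $G$ of order $2n^2$ generated by $\sigma$, $\tau$, and $\zeta$. Since $\alpha(n)=|G|^{-1}\sum_{g\in G}|\mathrm{Fix}(g)|$ and $|\mathrm{Fix}(g)|=2^{c(g)}$, where $c(g)$ is the number of cycles of $g$ acting as a permutation of the $n^2$ cells, everything reduces to a cycle count. The key structural observation is that $G$ splits into two cosets: the subgroup $H=\langle\sigma,\tau\rangle$ of index $2$, whose contribution is exactly $|H|^{-1}\sum_{g\in H}2^{c(g)}=a(n,n)$ by the result of \cite{E} recalled in \eqref{a(m,n)}, and the coset $H\zeta$, consisting of the $n^2$ elements $\sigma^i\tau^j\zeta$ for $0\le i,j\le n-1$. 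Thus
$$
\alpha(n)=\tfrac12\,a(n,n)+\frac{1}{2n^2}\sum_{i=0}^{n-1}\sum_{j=0}^{n-1}2^{c(\sigma^i\tau^j\zeta)},
$$
and the whole problem is to evaluate the second sum and match it against the claimed term $\tfrac1{2n}\sum_{d\mid n}\varphi(d)\,2^{n(n+d-2\lfloor d/2\rfloor)/(2d)}$.

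So the heart of the proof is a cycle analysis of the "transpose-type" elements $g=\sigma^i\tau^j\zeta$. On cell coordinates, $\zeta$ sends $(x,y)\mapsto(y,x)$, while $\sigma^i\tau^j$ sends $(x,y)\mapsto(x+i,y+j)$ (all arithmetic mod $n$), so $g$ acts by $(x,y)\mapsto(y+i,x+j)$. I would compute $g^2\colon (x,y)\mapsto(x+i+j,\,y+i+j)$, i.e. $g^2=\sigma^{i+j}\tau^{i+j}$, a "diagonal" rotation whose cycle structure is governed by $d:=n/\gcd(n,i+j)$: it is a product of $n^2/d$ cycles each of length $d$. From the structure of $g$ and $g^2$ one reads off the cycle lengths of $g$ itself: a cycle of $g^2$ of length $d$ either is a single $g$-cycle of length $2d$ (when $g$ does not fix that $g^2$-orbit setwise "in a symmetric way"), or splits/behaves specially depending on whether $d$ is odd or even and on the fixed points of $g$ on the relevant orbit. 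The upshot I expect is $c(g)=n(n+d-2\lfloor d/2\rfloor)/(2d)$ with $d=n/\gcd(n,i+j)$, which is why the exponent in \eqref{alpha(n)} has exactly that shape: for $d$ odd this is $n(n+1)/(2d)$ and for $d$ even it is $n^2/(2d)$.

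Having the formula $c(\sigma^i\tau^j\zeta)=n(n+d-2\lfloor d/2\rfloor)/(2d)$ with $d=n/\gcd(n,i+j)$, the remaining step is bookkeeping: group the $n^2$ pairs $(i,j)$ by the value of $k:=i+j \bmod n$. For each fixed $k$ there are exactly $n$ pairs $(i,j)$ with $i+j\equiv k$, and $d$ depends only on $k$; summing over $k$ and then re-indexing by $d\mid n$ (the number of $k\in\{0,\dots,n-1\}$ with $\gcd(n,k)=n/d$ is $\varphi(d)$) converts $\sum_{i,j}2^{c(g)}=n\sum_{k=0}^{n-1}2^{c} = n\sum_{d\mid n}\varphi(d)\,2^{n(n+d-2\lfloor d/2\rfloor)/(2d)}$. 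Dividing by $2n^2$ produces precisely the claimed second term, completing the proof. I would also separately note the degenerate case $n=1$ (where $G$ is trivial and $\alpha(1)=2$ agrees with the formula). The main obstacle is the cycle-counting step for $g=\sigma^i\tau^j\zeta$: one must carefully track how the involutive "twist" from $\zeta$ interacts with the diagonal translation $g^2$, handling the parity of $d$ and counting the cells on the antidiagonal-type fixed set of $g$; everything after that is routine divisor-sum manipulation.
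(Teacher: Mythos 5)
Your proposal is correct and follows essentially the same route as the paper: Burnside/P\'olya on the order-$2n^2$ group, splitting off the coset $\langle\sigma,\tau\rangle\zeta$, computing $(\sigma^i\tau^j\zeta)^2=\sigma^{i+j}\tau^{i+j}$, and reducing to a divisor sum indexed by $d=n/\gcd(i+j,n)$ with the correct cycle counts $n(n+1)/(2d)$ for odd $d$ and $n^2/(2d)$ for even $d$. Be aware that the one step you leave as an ``expected upshot'' is exactly where the paper's proof spends its effort: it shows all cycle lengths are $d$ or $2d$ (ruling out proper divisors of $d$ via the minimality of $d$) and then counts the $n$ fixed points of $(\sigma^i\tau^j\zeta)^d$ explicitly when $d$ is odd, so your sketch would need that fixed-point computation written out to be complete.
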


\begin{proof}
Let us assume that $n\ge2$.  By P\'olya's enumeration theorem,
\begin{equation}\label{alpha(n)-prelim}
\alpha(n)=\frac{1}{2n^2}\sum_{i=0}^{n-1}\sum_{j=0}^{n-1}(2^{A_{ij}}+2^{E_{ij}}),
\end{equation}
where $A_{ij}$ (resp., $E_{ij}$) is the number of cycles in the permutation $\sigma^i\tau^j$ (resp., $\sigma^i\tau^j\zeta$); here $\sigma$ rotates the rows (row 0 becomes row 1, row 1 becomes row 2, \dots, row $n-1$ becomes row 0), $\tau$ rotates the columns,  and $\zeta$ transposes the matrix.  We know from \cite{E} that
\begin{equation}\label{a(n,n)-prelim}
a(n,n)=\frac{1}{n^2}\sum_{i=0}^{n-1}\sum_{j=0}^{n-1}2^{A_{ij}},
\end{equation}
so it remains to find $E_{ij}$.  The permutation $\zeta$ has $n$ fixed points and $\binom{n}{2}$ transpositions, so $E_{00}=n(n+1)/2$.

Notice that $\sigma$ and $\tau$ commute, whereas $\sigma\zeta=\zeta\tau$ and $\tau\zeta=\zeta\sigma$.  Let $(i,j)\in\{0,1,\ldots,n-1\}^2-\{(0,0)\}$ be arbitrary.  Then
\begin{equation*}
(\sigma^i\tau^j\zeta)^2=(\sigma^i\tau^j\zeta)(\zeta\tau^i\sigma^j)=\sigma^{i+j}\tau^{i+j},
\end{equation*}
hence
\begin{eqnarray*}
(\sigma^i\tau^j\zeta)^{2d}&=&\sigma^{(i+j)d}\tau^{(i+j)d}=((\sigma\tau)^{i+j})^d,\\
(\sigma^i\tau^j\zeta)^{2d+1}&=&\sigma^{(i+j)d+i}\tau^{(i+j)d+j}\zeta.
\end{eqnarray*}
Clearly, $(\sigma^i\tau^j\zeta)^{2d+1}$ cannot be the identity permutation, so $\sigma^i\tau^j\zeta$ is of even order.  Using the fact that, in the cyclic group $\{a,a^2,\ldots,a^{n-1},a^n=e\}$ of order $n$, $a^k$ is of order $n/\gcd(k,n)$, we find that the permutation $\sigma^i\tau^j\zeta$ is of order $2d$, where $d:=n/\gcd(i+j,n)$.  Therefore, every cycle of this permutation must have length that divides $2d$.

We claim that all cycles have length $d$ or $2d$.  Accepting that for now, let us determine how many cycles have length $d$.  A cycle that includes entry $(k,l)$ has length $d$ if $(k,l)$ is a fixed point of $(\sigma^i\tau^j\zeta)^d$.  For this to hold we must have $d$ odd (otherwise there would be no fixed points because we have excluded the case $i=j=0$ and $(i+j)d/2=\lcm(i+j,n)/2$ is not a multiple of $n$).  Since
$$
(\sigma^i\tau^j\zeta)^d=\sigma^{(i+j)(d-1)/2+i}\tau^{(i+j)(d-1)/2+j}\zeta,
$$
we must also have
\begin{equation}\label{fixed-point-eq1}
(k,l)=([l+(i+j)(d-1)/2+j],[k+(i+j)(d-1)/2+i]),
\end{equation}
where $d:=n/\gcd(i+j,n)$ and, for simplicity, $[r]:=(r\text{ mod }n)\in\{0,1,\ldots,n-1\}$.  For each $k\in\{0,1,\ldots,n-1\}$, there is a unique $l$ (namely, $l:=[k+(i+j)(d-1)/2+i]$) such that \eqref{fixed-point-eq1} holds; indeed, 
\begin{eqnarray*}
&&\!\!\!\!\!\!\!\!\!\![l+(i+j)(d-1)/2+j]\\
&=&[[k+(i+j)(d-1)/2+i]+(i+j)(d-1)/2+j]\\
&=&[k+(i+j)(d-1)/2+i+(i+j)(d-1)/2+j]\\
&=&[k+(i+j)d]\\
&=&[k+(i+j)(n/\gcd(i+j,n))]\\
&=&[k+\lcm(i+j,n)]\\
&=&k.
\end{eqnarray*}
This shows that there are $n$ fixed points of $(\sigma^i\tau^j\zeta)^d$.  Each cycle of length $d$ of $\sigma^i\tau^j\zeta$ will account for $d$ such fixed points, hence there are $n/d$ such cycles.  All remaining cycles will have length $2d$, and so there are $n(n-1)/(2d)$ of these.  The total number of cycles is therefore $n(n+1)/(2d)$.

The other possibility is that $d$ is even and all cycles have the same length, $2d$, so there are $n^2/(2d)$ of them.  Notice that $d$ is a divisor of $n$, so the contribution to 
$$
\sum_{i=0}^{n-1}\sum_{j=0}^{n-1}2^{E_{ij}}
$$
from odd $d$ is
\begin{equation}\label{odd-case}
\sum_{d\,|\,n:\; d\text{ is odd}}n\varphi(d)2^{n(n+1)/(2d)}
\end{equation}
and from even $d$ is
\begin{equation}\label{even-case}
\sum_{d\,|\,n:\; d\text{ is even}}n\varphi(d)2^{n^2/(2d)}.
\end{equation}
The reason for the coefficient $n\varphi(d)$ is that, if $d\,|\,n$, then the number of elements of the cyclic group $\{e,\sigma\tau,(\sigma\tau)^2,\ldots,(\sigma\tau)^{n-1}\}$ that are of order $d$ is $\varphi(d)$.  And for a given $(i,j)\in\{0,1,\ldots,n-1\}^2$, there are $n$ pairs $(k,l)\in\{0,1,\ldots,n-1\}^2$ such that $[k+l]=[i+j]$.  Putting \eqref{odd-case} and \eqref{even-case} together, we obtain 
\begin{equation}\label{Eij}
\sum_{i=0}^{n-1}\sum_{j=0}^{n-1}2^{E_{ij}}=\sum_{d\,|\,n}n\varphi(d)2^{n(n+d-2\lfloor d/2\rfloor)/(2d)},
\end{equation}
which, together with \eqref{alpha(n)-prelim} and \eqref{a(n,n)-prelim}, yields \eqref{alpha(n)}.
\end{proof}
.  

It remains to prove our claim that, for $(i,j)\in\{0,1,\ldots,n-1\}^2-\{(0,0)\}$, the permutation $\sigma^i\tau^j\zeta$ cannot have any cycles whose length is a proper divisor of $d:=n/\gcd(i+j,n)$.  Let $c\,|\,d$ with $1\le c<d$.  We must show that $(\sigma^i\tau^j\zeta)^c$ has no fixed points.  We can argue as above with $c$ in place of $d$.  For $(k,l)$ to be a fixed point of $(\sigma^i\tau^j\zeta)^c$ we must have $(i+j)c$ a multiple of $n$.  But $d:=n/\gcd(i+j,n)$ is the smallest integer $c$ such that $(i+j)c$ is a multiple of $n$ because $(i+j)n/\gcd(i+j,n)=\lcm(i+j,n)$.

Finally, we excluded the case $n=1$ at the beginning of the proof, but we notice that the formula \eqref{alpha(n)} gives $\alpha(1)=2$, which is correct.

\section{Rotation and reflection of rows and columns, and matrix transposition}

Let $X_n:=\{0,1\}^{\{0,1,\ldots,n-1\}^2}$ be the set of $n\times n$ matrices of 0s and 1s, which has $2^{n^2}$
elements.  Let $\beta(n)$ denote the number of orbits of the group action on $X_n$ by the group of order $8n^2$ generated by $\sigma$ (row rotation), $\tau$ (column rotation), $\rho$ (row reflection), $\theta$ (column reflection), and $\zeta$ (matrix transposition).  (Exceptions: If $n=2$, the group is of order 8; if $n=1$, the group is of order 1.)

Informally, $\beta(n)$ is the number of (distinct) toroidal $n\times n$ binary arrays, allowing rotation and/or reflection of rows and/or columns as well as matrix transposition.

\begin{theorem}\label{beta-thm}
With $b(n,n)$ defined using \eqref{b(m,n)}, $\beta(n)$ is given by \eqref{beta(n)}.
\end{theorem}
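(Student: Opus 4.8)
The plan is to apply P\'olya's enumeration theorem to the group of order $8n^2$ generated by $\sigma,\tau,\rho,\theta,\zeta$, which requires counting the cycles of every group element acting on the $n^2$ cells. The group splits naturally into the index-$2$ subgroup $G_0=\langle\sigma,\tau,\rho,\theta\rangle$ of rotations/reflections (whose cycle counts are already encoded in $b(n,n)$ via \eqref{b(m,n)}, since $b(n,n)=\frac{1}{8n^2}\sum_{g\in G_0}2^{(\text{\#cycles of }g)}$) and the coset $G_0\zeta$ consisting of the elements $g\zeta$ with $g\in G_0$. Accordingly I would write $\beta(n)=\frac12 b(n,n)+\frac{1}{8n^2}\sum_{g\in G_0}2^{F_g}$, where $F_g$ is the number of cycles of $g\zeta$, and the whole task reduces to evaluating the second sum.

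The coset $G_0\zeta$ decomposes into four pieces according to which of the four ``types'' of element of $G_0$ is multiplied by $\zeta$: (a) the pure translations $\sigma^i\tau^j$, giving $\sigma^i\tau^j\zeta$; (b) the row-reflections-times-translations $\sigma^i\tau^j\rho$; (c) the column-reflections $\sigma^i\tau^j\theta$; and (d) the double reflections $\sigma^i\tau^j\rho\theta$. For piece (a) the cycle count is exactly the quantity $E_{ij}$ computed in the proof of Theorem~\ref{alpha-thm}, so $\sum 2^{E_{ij}}$ is already given by \eqref{Eij}; this contributes the term $\frac{1}{8n^2}\cdot n\sum_{d\mid n}\varphi(d)2^{n(n+d-2\lfloor d/2\rfloor)/(2d)}=\frac{1}{4n}\sum_{d\mid n}\varphi(d)2^{n(n+d-2\lfloor d/2\rfloor)/(2d)}$, matching the middle term of \eqref{beta(n)}. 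For pieces (b), (c), (d) I would observe that $\rho$ and $\theta$ are swapped by conjugation by $\zeta$ (since $\zeta$ transposes, it interchanges the roles of rows and columns), so $\sigma^i\tau^j\rho$ and $\sigma^i\tau^j\theta$ behave symmetrically; in fact $(\sigma^i\tau^j\theta\zeta)=\zeta(\sigma^j\tau^i\rho)$ up to relabeling, which should make pieces (b) and (c) contribute equally. For each such element one computes its square (a translation or a glide-type map) to determine its order, then counts fixed points of appropriate powers exactly as in the $\alpha$ proof — this is the bookkeeping that produces the final case-split constant $2^{(n^2-5)/4}$ or $5\cdot 2^{n^2/4-3}$.

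The key technical steps, in order, are: (1) reduce to $\beta(n)=\frac12 b(n,n)+\frac{1}{8n^2}\sum_{g\in G_0}2^{F_g}$; (2) split $G_0\zeta$ into the four families and dispatch family (a) using \eqref{Eij}; (3) for family (d), $\sigma^i\tau^j\rho\theta\zeta$: since $\rho\theta$ is the central $180^\circ$ rotation and commutes with $\zeta$, this is structurally like family (a) but with an extra reflection built in, so compute its square, find it equals some $(\sigma\tau)^{\pm(i\pm j)}$ or a pure reflection, and count cycles of the relevant powers; (4) for families (b) and (c), handle $\sigma^i\tau^j\rho\zeta$ directly — here $\zeta\rho\zeta=\theta$, so squaring gives $\sigma^i\tau^j\rho\zeta\,\sigma^i\tau^j\rho\zeta=\sigma^i\tau^j\rho\,\theta\,\tau^i\sigma^j\rho=$ a map whose cycle structure must be analyzed by parity of the relevant indices; (5) collect all four contributions, convert the double sums over $(i,j)$ into sums over divisors $d\mid n$ weighted by $\varphi(d)$ (using, as in the $\alpha$ proof, that there are $n$ pairs $(i,j)$ with $i+j\equiv r$ for each residue $r$, and analogous counts for the reflection families), and simplify. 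I expect the main obstacle to be step (4): the glide-reflection-type elements $\sigma^i\tau^j\rho\zeta$ have cycle structure that depends delicately on whether certain linear combinations of $i$, $j$, and $n$ are even or odd and on $\gcd$ conditions, so the fixed-point counts of their various powers require several parity subcases, and it is precisely the residual ``small'' cycles in these cases (the ones of length $1$, $2$, or $4$ that survive no matter what) that must be tallied to produce the standalone constants $2^{(n^2-5)/4}$ and $5\cdot 2^{n^2/4-3}$; getting that constant exactly right, with the correct $n$-parity split, is the crux. As with Theorem~2 of \cite{E} and Theorem~\ref{alpha-thm}, the argument nominally assumes $n\ge3$ (so that the group has full order $8n^2$), and one checks separately that \eqref{beta(n)} still gives the correct values $\beta(1)=2$ and $\beta(2)=6$.
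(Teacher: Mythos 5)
Your overall strategy coincides with the paper's: apply P\'olya's theorem to the order-$8n^2$ group, absorb the subgroup $\langle\sigma,\tau,\rho,\theta\rangle$ into $\tfrac12 b(n,n)$, and handle the coset by splitting it into the four families $\sigma^i\tau^j\zeta$, $\sigma^i\tau^j\rho\zeta$, $\sigma^i\tau^j\theta\zeta$, $\sigma^i\tau^j\rho\theta\zeta$. But there is a concrete factor-of-$2$ bookkeeping error that misassigns the terms of \eqref{beta(n)}. You claim family (a) alone yields the middle term, via $\frac{1}{8n^2}\cdot n\sum_{d\mid n}\varphi(d)2^{n(n+d-2\lfloor d/2\rfloor)/(2d)}=\frac{1}{4n}\sum_{d\mid n}\varphi(d)2^{n(n+d-2\lfloor d/2\rfloor)/(2d)}$; but $\frac{n}{8n^2}=\frac{1}{8n}$, so family (a) supplies only half of that term. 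The other half comes from family (d): the paper proves $\sum_{i,j}2^{H_{ij}}=\sum_{i,j}2^{E_{ij}}$ (equation \eqref{Hij=Eij}) by showing $(\sigma^i\tau^j\rho\theta\zeta)^2=(\sigma\tau^{-1})^{i-j}$, so that $\sigma^i\tau^j\rho\theta\zeta$ has order $2d$ with $d=n/\gcd(|i-j|,n)$ and exactly the same cycle-length statistics as $\sigma^i\tau^j\zeta$. Your plan instead expects family (d) to feed into the standalone constants $2^{(n^2-5)/4}$ and $5\cdot 2^{n^2/4-3}$; those come entirely from families (b) and (c).

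Second, the one explicit computation you offer for what you correctly identify as the crux is wrong. The correct identity is $(\sigma^i\tau^j\rho\zeta)^2=\sigma^{i-j}\tau^{i+j}\rho\theta$, an involution, so every $\sigma^i\tau^j\rho\zeta$ has order exactly $4$; your expression $\sigma^i\tau^j\rho\,\theta\,\tau^i\sigma^j\rho$ simplifies to $\sigma^{i-j}\tau^{j-i}\theta$, whose square is $\sigma^{2(i-j)}\neq e$ in general, which would wrongly give these elements unbounded order and derail the cycle count. Beyond that, the fixed-point analysis that actually produces the constants --- one fixed point and $(n^2-1)/4$ four-cycles when $n$ is odd, versus the $i+j$ parity split when $n$ is even (two fixed points, one transposition, and four-cycles for $i+j$ odd; all four-cycles for $i+j$ even) --- is precisely the part you leave open, so the proposal as written does not yet establish the theorem.
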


\begin{proof}
Let us assume that $n\ge3$.  (We will treat the cases $n=1$ and $n=2$ later.)  By P\'olya's enumeration theorem,
$$
\beta(n)=\frac{1}{8n^2}\sum_{i=0}^{n-1}\sum_{j=0}^{n-1}(2^{A_{ij}}+2^{B_{ij}}+2^{C_{ij}}+2^{D_{ij}}+2^{E_{ij}}+2^{F_{ij}}+2^{G_{ij}}+2^{H_{ij}}),
$$
where $A_{ij}$ (resp., $B_{ij}$, $C_{ij}$, $D_{ij}$, $E_{ij}$, $F_{ij}$, $G_{ij}$, $H_{ij}$) is the number of cycles in the permutation $\sigma^i\tau^j$ (resp., $\sigma^i\tau^j\rho$, $\sigma^i\tau^j\theta$, $\sigma^i\tau^j\rho\theta$, $\sigma^i\tau^j\zeta$, $\sigma^i\tau^j\rho\zeta$, $\sigma^i\tau^j\theta\zeta$, $\sigma^i\tau^j\rho\theta\zeta$); here $\sigma$ rotates the rows (row 0 becomes row 1, row 1 becomes row 2, \dots, row $n-1$ becomes row 0), $\tau$ rotates the columns, $\rho$ reflects the rows (rows 0 and $n-1$ are interchanged, rows 1 and $n-2$ are interchanged, \dots, rows $\lfloor n/2\rfloor-1$ and $n-\lfloor n/2\rfloor$ are interchanged), $\theta$ reflects the columns, and $\zeta$ transposes the matrix.  The order of the group generated by $\sigma$, $\tau$, $\rho$, $\theta$, and $\zeta$ is $8n^2$, using the assumption that $n\ge3$.

We have already evaluated 
$$
a(n,n)=\frac{1}{n^2}\sum_{i=0}^{n-1}\sum_{j=0}^{n-1}2^{A_{ij}},
$$
$$
\alpha(n)=\frac{1}{2n^2}\sum_{i=0}^{n-1}\sum_{j=0}^{n-1}(2^{A_{ij}}+2^{E_{ij}}),
$$
and
$$
b(n,n)=\frac{1}{4n^2}\sum_{i=0}^{n-1}\sum_{j=0}^{n-1}(2^{A_{ij}}+2^{B_{ij}}+2^{C_{ij}}+2^{D_{ij}}),
$$
so
\begin{equation}\label{beta(n)-prelim}
\beta(n)=\frac{1}{2}\,b(n,n)+\frac{1}{4}\bigg(\alpha(n)-\frac{1}{2}a(n,n)\bigg)+\frac{1}{8n^2}\sum_{i=0}^{n-1}\sum_{j=0}^{n-1}(2^{F_{ij}}+2^{G_{ij}}+2^{H_{ij}}).\quad
\end{equation}

Let us begin with
$$
\sum_{i=0}^{n-1}\sum_{j=0}^{n-1}2^{H_{ij}}.
$$
Here we are concerned with the permutations $\sigma^i\tau^j\rho\theta\zeta$ for $(i,j)\in\{0,1,\ldots,n-1\}^2$.  We will need some multiplication rules for the permutations $\sigma$, $\tau$, $\rho$, $\theta$, and $\zeta$, specifically 
$$
\sigma\tau=\tau\sigma,\quad \sigma\theta=\theta\sigma,\quad \tau\rho=\rho\tau,\quad \rho\theta=\theta\rho,\quad \sigma\rho=\rho\sigma^{-1},\quad \tau\theta=\theta\tau^{-1},
$$
and
$$
\sigma\zeta=\zeta\tau,\quad \tau\zeta=\zeta\sigma,\quad \rho\zeta=\zeta\theta,\quad \theta\zeta=\zeta\rho.
$$
It follows that (with $\tau^{-i}:=(\tau^{-1})^i$)
$$
\sigma^i\tau^j\rho\theta\zeta=\sigma^i\tau^j\zeta\theta\rho=\zeta\tau^i\sigma^j\theta\rho=\zeta\theta\tau^{-i}\sigma^j\rho=\zeta\theta\rho\tau^{-i}\sigma^{-j},
$$
and hence
\begin{equation}\label{square-1}
(\sigma^i\tau^j\rho\theta\zeta)^2=(\sigma^i\tau^j\rho\theta\zeta)(\zeta\theta\rho\tau^{-i}\sigma^{-j})=\sigma^{i-j}\tau^{-i+j}=(\sigma\tau^{-1})^{i-j}=(\sigma^{-1}\tau)^{-i+j}.
\end{equation}

In particular, if $i\in\{0,1,\ldots,n-1\}$, then the permutation $\sigma^i\tau^i\rho\theta\zeta$ is of order 2.  Furthermore, under this permutation, the entry in position $(k,l)$ moves to position $(n-1-[l+i],n-1-[k+i])$, where, as before, $[r]:=(r\text{ mod }n)\in\{0,1,\ldots,n-1\}$.  Thus, $(k,l)$ is a fixed point if and only if 
\begin{equation}\label{fixed-point-eq2}
(k,l)=(n-1-[l+i],n-1-[k+i]).
\end{equation}
For each $k\in\{0,1,\ldots,n-1\}$ there is a unique $l\in\{0,1,\ldots,n-1\}$ (namely $l:=n-1-[k+i]$) such that \eqref{fixed-point-eq2} holds; indeed,
\begin{eqnarray*}
n-1-[l+i]&=&n-1-[n-1-[k+i]+i]=n-1-[n-1-(k+i)+i]\\
&=&n-1-[n-1-k]=n-1-(n-1-k)=k.
\end{eqnarray*}
Thus, $\sigma^i\tau^i\rho\theta\zeta$ with $i\in\{0,1,\ldots,n-1\}$ is of order 2 and has exactly $n$ fixed points, hence $\binom{n}{2}$ transpositions.  This implies that $H_{ii}=n(n+1)/2$ for such $i$.

Now we let $(i,j)\in\{0,1,\ldots,n-1\}^2$ be arbitrary but with $i\ne j$.  Let us generalize \eqref{square-1} to
\begin{eqnarray*}
(\sigma^i\tau^j\rho\theta\zeta)^{2d}&=&\sigma^{(i-j)d}\tau^{(-i+j)d}=((\sigma\tau^{-1})^{i-j})^d=((\sigma^{-1}\tau)^{-i+j})^d,\\
(\sigma^i\tau^j\rho\theta\zeta)^{2d+1}&=&\sigma^{(i-j)d+i}\tau^{(-i+j)d+j}\rho\theta\zeta.
\end{eqnarray*}
The proof proceeds much like the proof of Theorem 1.  Specifically, $\sigma^i\tau^j\rho\theta\zeta$ is of order $2d$, where $d:=n/\gcd(|i-j|,n)$.  All cycles have length $d$ or $2d$.  In fact, if $d$ is odd, there are $n/d$ cycles of length $d$ and $n(n-1)/(2d)$ cycles of length $2d$.  If $d$ is even, there are $n^2/(2d)$ cycles, all of length $2d$.  And for a given $(i,j)\in\{0,1,\ldots,n-1\}^2$, there are $n$ pairs $(k,l)\in\{0,1,\ldots,n-1\}^2$ such that $[k-l]=[|i-j|]$.  We arrive at the conclusion that
\begin{equation}\label{Hij=Eij}
\sum_{i=0}^{n-1}\sum_{j=0}^{n-1}2^{H_{ij}}=\sum_{i=0}^{n-1}\sum_{j=0}^{n-1}2^{E_{ij}}.
\end{equation}

Next we evaluate 
\begin{equation}\label{Fij=Gij}
\sum_{i=0}^{n-1}\sum_{j=0}^{n-1}2^{F_{ij}}=\sum_{i=0}^{n-1}\sum_{j=0}^{n-1}2^{G_{ij}},
\end{equation}
where the equality holds by symmetry. We consider the permutations $\sigma^i\tau^j \rho \zeta$ for $(i,j)\in\{0,1,\ldots,n-1\}^2$. From the multiplication rules, it follows that
$$\sigma^i\tau^j\rho\zeta = \zeta\theta\tau^{-i}\sigma^j$$
and hence
\begin{equation}\label{square-2}
(\sigma^i\tau^j\rho\zeta)^2=(\sigma^i\tau^j\rho\zeta)(\zeta\theta\tau^{-i}\sigma^j)=\sigma^i\tau^j\rho\theta\tau^{-i}\sigma^j=\sigma^{i-j}\tau^{i+j}\rho\theta=\theta\rho\tau^{-i-j}\sigma^{-i+j},
\end{equation}
which implies
$$
(\sigma^i\tau^j\rho\zeta)^4 =(\sigma^{i-j}\tau^{i+j}\rho\theta)(\theta\rho\tau^{-i-j}\sigma^{-i+j})=e.
$$
So the permutation $\sigma^i\tau^j \rho \zeta$ is of order 4. The entry in position $(k,l)$ moves to position $([l+j],n-1-[k+i])$ under this permutation. Thus, $(k,l)\in\{0,1,\ldots,n-1\}^2$ is a fixed point of $\sigma^i\tau^j \rho \zeta$ if and only if 
\begin{equation*}
(k,l)=([l+j],n-1-[k+i]).
\end{equation*}
There is a solution $(k,l)$ if and only if there exists $l\in\{0,1,\ldots,n-1\}$ such that, with $k:=[l+j]$, we have $n-1-[k+i]=l$ or, equivalently,
\begin{equation}\label{fixed-point-eq3}
[l+i+j]=n-1-l.
\end{equation}

When $i+j \leq n-1$, \eqref{fixed-point-eq3} is equivalent to 
$$
l+i+j=n-1-l \quad \text{or} \quad l+i+j-n=n-1-l
$$ 
or to
$$
l=(n-1-i-j)/2\quad \text{or} \quad l=(2n-1-i-j)/2.
$$
If $n$ is odd and $i+j$ is odd, then  there is one fixed point, $(k,l)=([(2n-1-i+j)/2],[(2n-1-i-j)/2])$.  If $n$ is odd and $i+j$ is even, then there is one fixed point, $(k,l)=([(n-1-i+j)/2],[(n-1-i-j)/2])$.  If $n$ is even and $i+j$ is odd, then there are two fixed points, namely
\begin{eqnarray*}
(k,l)&=&([(n-1-i+j)/2],[(n-1-i-j)/2]),\\
(k,l)&=&([(2n-1-i+j)/2],[(2n-1-i-j)/2]).
\end{eqnarray*}
Finally, if $n$ is even and $i+j$ is even, then there is no fixed point.

When $i+j\ge n$, \eqref{fixed-point-eq3} is equivalent to 
$$
l+i+j-n=n-1-l \quad \text{or} \quad l+i+j-2n=n-1-l
$$ 
or to
$$
l=(2n-1-i-j)/2 \quad \text{or} \quad l=(3n-1-i-j)/2.
$$
If $n$ is odd and $i+j$ is odd, then there is one fixed point, $(k,l)=([(2n-1-i+j)/2],[(2n-1-i-j)/2])$.  If $n$ is odd and $i+j$ is even, then there is one fixed point, $(k,l)=([(n-1-i+j)/2],[(3n-1-i-j)/2])=([(n-1-i+j)/2],[(n-1-i-j)/2])$.  If $n$ is even and $i+j$ is odd, then there are two fixed points, namely
\begin{eqnarray*}
(k,l)&=&([(2n-1-i+j)/2],[(2n-1-i-j)/2]),\\
(k,l)&=&([(n-1-i+j)/2],[(n-1-i-j)/2]).
\end{eqnarray*}
Finally, if $n$ is even and $i+j$ is even, then there is no fixed point.  Notice that the results are the same for $i+j\ge n$ as for $i+j \leq n-1$.

Using \eqref{square-2}, under the permutation $(\sigma^i\tau^j \rho \zeta)^2$, the entry in position $(k,l)$ moves to position $(n-1-[k+i-j],n-1-[l+i+j])$. Thus, $(k,l)\in\{0,1,\ldots,n-1\}^2$ is a fixed point of $(\sigma^i\tau^j \rho \zeta)^2$ if and only if 
\begin{equation*}
(k,l)=(n-1-[k+i-j],n-1-[l+i+j]).
\end{equation*}
A necessary and sufficient condition on $(k,l)$ is \eqref{fixed-point-eq3} together with $[k+i-j]=n-1-k$.
Solutions have $l$ as before.  On the other hand, $k$ must satisfy
$$
k+i-j-n=n-1-k,\quad k+i-j=n-1-k,\quad\text{or}\quad k+i-j+n=n-1-k,
$$
or equivalently,
$$
k=[(n-1-i+j)/2] \quad \mbox{or} \quad k=[(2n-1-i+j)/2].
$$

If $n$ is odd, the only fixed points of $(\sigma^i\tau^j \rho \zeta)^2$ are those already shown to be fixed points of $\sigma^i\tau^j \rho \zeta$.  If $n$ is even and $i+j$ is odd, there are two fixed points of $(\sigma^i\tau^j \rho \zeta)^2$ that are not fixed points of $\sigma^i\tau^j \rho \zeta$, namely
\begin{eqnarray*}
(k,l)&=&([(n-1-i+j)/2],[(2n-1-i-j)/2]),\\
(k,l)&=&([(2n-1-i+j)/2],[(n-1-i-j)/2]).
\end{eqnarray*}
Finally, there are no fixed points when $n$ is even and $i+j$ is even.

Consequently, if $n$ is odd, then the permutation $\sigma^i\tau^j\rho\zeta$, which is of order 4, has only one fixed point.  Therefore, it has one cycle of length 1 and $(n^2-1)/4$ cycles of length 4. Thus, 
$$
\sum_{i=0}^{n-1} \sum_{j=0}^{n-1} 2^{F_{ij}}= n^2 2^{(n^2+3)/4}. 
$$
For even $n$, if $i+j$ is odd, then the permutation $\sigma^i\tau^j\rho\zeta$ has two cycles of length 1 and one cycle of length 2, and the remaining cycles are of length 4.  If $i+j$ is even, then all cycles of the permutation $\sigma^i\tau^j \rho \zeta$ are of length 4, hence there are $n^2/4$ of them.  Thus,
$$
\sum_{i=0}^{n-1} \sum_{j=0}^{n-1} 2^{F_{ij}}= \frac{1}{2}n^2 2^{(n^2-4)/4+3}+\frac{1}{2}n^2 2^{n^2/4}=5 n^2 2^{n^2/4-1}.
$$
These results, together with \eqref{alpha(n)}, \eqref{Eij}, \eqref{beta(n)-prelim}, \eqref{Hij=Eij}, and \eqref{Fij=Gij}, yield \eqref{beta(n)}.

Finally, recall that we have assumed that $n\ge3$.  We notice that the formula \eqref{beta(n)} gives $\beta(1)=2$ and $\beta(2)=6$, which are  correct, as we can see by direct enumeration.  
\end{proof}

In the derivation of \eqref{b(m,n)} in \cite{E}, the proof requires $m,n\ge3$ because the group $D_m\times D_n$ used in the application of P\'olya's enumeration theorem ($D_m$ being the dihedral group of order $2m$), is incorrect if $m$ or $n$ is 1 or 2.  If $m=2$, row rotation and row reflection are the same, so the latter is redundant.  Thus, $D_2$ should be replaced by $C_2$, the cyclic group of order 2.  The reason  \eqref{b(m,n)} is still valid is that $b_1(2,n)=b_2(2,n)$ and $b_3(2,n)=b_4(2,n)$, as is easily verified.  If $m=1$, again row reflection is redundant, so $D_1$ should be replaced by $C_1$.  Here \eqref{b(m,n)} remains valid because $b_1(1,n)=b_2(1,n)$ and $b_3(1,n)=b_4(1,n)$.  A similar remark applies to $n=2$ and $n=1$, except that here $b_1(m,2)=b_3(m,2)$, $b_2(m,2)=b_4(m,2)$, $b_1(m,1)=b_3(m,1)$, and $b_2(m,1)=b_4(m,1)$.


\end{document}